\begin{document}

\newcommand{\mmbox}[1]{\mbox{${#1}$}}
\newcommand{\proj}[1]{\mmbox{{\mathbb P}^{#1}}}
\newcommand{\affine}[1]{\mmbox{{\mathbb A}^{#1}}}
\newcommand{\Ann}[1]{\mmbox{{\rm Ann}({#1})}}
\newcommand{\caps}[3]{\mmbox{{#1}_{#2} \cap \ldots \cap {#1}_{#3}}}
\newcommand{\N}{{\mathbb N}}
\newcommand{\Z}{{\mathbb Z}}
\newcommand{\R}{{\mathbb R}}
\newcommand{\A}{{\mathcal A}}
\newcommand{\C}{{\mathbb C}}
\newcommand{\Tor}{\mathop{\rm Tor}\nolimits}
\newcommand{\ot}{\mathop{\rm OT}\nolimits}
\newcommand{\ao}{\mathop{\rm AOT}\nolimits}
\newcommand{\Ext}{\mathop{\rm Ext}\nolimits}
\newcommand{\Hom}{\mathop{\rm Hom}\nolimits}
\newcommand{\im}{\mathop{\rm Im}\nolimits}
\newcommand{\rank}{\mathop{\rm rank}\nolimits}
\newcommand{\codim}{\mathop{\rm codim}\nolimits}
\newcommand{\supp}{\mathop{\rm supp}\nolimits}
\newcommand{\CB}{Cayley-Bacharach}
\newcommand{\HF}{\mathrm{HF}}
\newcommand{\HP}{\mathrm{HP}}
\newcommand{\coker}{\mathop{\rm coker}\nolimits}
\sloppy
\newtheorem{defn0}{Definition}[section]
\newtheorem{prop0}[defn0]{Proposition}
\newtheorem{conj0}[defn0]{Conjecture}
\newtheorem{thm0}[defn0]{Theorem}
\newtheorem{lem0}[defn0]{Lemma}
\newtheorem{corollary0}[defn0]{Corollary}
\newtheorem{example0}[defn0]{Example}
\newtheorem{rmk0}[defn0]{Remark}

\newenvironment{rmk}{\begin{rmk0}}{\end{rmk0}}
\newenvironment{defn}{\begin{defn0}}{\end{defn0}}
\newenvironment{prop}{\begin{prop0}}{\end{prop0}}
\newenvironment{conj}{\begin{conj0}}{\end{conj0}}
\newenvironment{thm}{\begin{thm0}}{\end{thm0}}
\newenvironment{lem}{\begin{lem0}}{\end{lem0}}
\newenvironment{cor}{\begin{corollary0}}{\end{corollary0}}
\newenvironment{exm}{\begin{example0}\rm}{\end{example0}}

\newcommand{\msp}{\renewcommand{\arraystretch}{.5}}
\newcommand{\rsp}{\renewcommand{\arraystretch}{1}}

\newenvironment{lmatrix}{\renewcommand{\arraystretch}{.5}\small
 \begin{pmatrix}} {\end{pmatrix}\renewcommand{\arraystretch}{1}}
\newenvironment{llmatrix}{\renewcommand{\arraystretch}{.5}\scriptsize
 \begin{pmatrix}} {\end{pmatrix}\renewcommand{\arraystretch}{1}}
\newenvironment{larray}{\renewcommand{\arraystretch}{.5}\begin{array}}
 {\end{array}\renewcommand{\arraystretch}{1}}

\def \a{{\mathrel{\smash-}}{\mathrel{\mkern-8mu}}
{\mathrel{\smash-}}{\mathrel{\mkern-8mu}} {\mathrel{\smash-}}{\mathrel{\mkern-8mu}}}

\title {The Orlik-Terao algebra and 2-formality}

\author{Hal Schenck}
\thanks{Schenck supported by NSF 07--07667, NSA 904-03-1-0006}
\address{Schenck: Mathematics Department \\ University of
 Illinois \\
   Urbana \\ IL 61801\\USA}
\email{schenck@math.uiuc.edu}

\author{\c Stefan O. Toh\v aneanu}
\address{Tohaneanu: Math Department \\ University of Cincinnati \\
  Cincinnati \\ OH 45221\\USA}
\email{ stefan.tohaneanu@uc.edu}

\subjclass[2000]{52C35, 13D02, 13D40} \keywords{Hyperplane arrangement, Free resolution, Orlik-Terao algebra}

\begin{abstract}
\noindent The Orlik-Solomon algebra is the cohomology ring of the complement of a hyperplane arrangement $\A \subseteq \C^n$;
it is the quotient of an exterior algebra $\Lambda(V)$ on $|\A|$ generators. In \cite{ot1}, Orlik and Terao introduced a
commutative analog $Sym(V^*)/I$ of the Orlik-Solomon algebra to answer a question of Aomoto and
showed the Hilbert series depends only on the intersection lattice $L(\A)$. In \cite{fr}, Falk and Randell define the
property of 2-formality; in this note we study the relation between 2-formality and the Orlik-Terao algebra. Our
main result is a necessary and sufficient condition for 2-formality in terms of the quadratic component 
$I_2$ of the Orlik-Terao ideal $I$; 2-formality is determined by the tangent 
space $T_p(V(I_2) \cap (\C^*)^{d-1})$ at a generic point $p$.
\end{abstract}
\maketitle

\section{Introduction}\label{sec:one}
Let $\A=\{H_1,\dots ,H_d\}$ be an arrangement of complex hyperplanes in $\C^n$. 
In \cite{OS}, Orlik and Solomon showed that the cohomology ring of the 
complement $X=\C^n\setminus \bigcup_{i=1}^d H_i$ is determined by the intersection lattice
\[
L(\A)=\{\bigcap_{H\in \A'}\, H \mid \A'\subseteq \A\}.
\]
The Orlik-Solomon algebra $H^*(X,\Z)$ is the quotient of the exterior algebra $E=\bigwedge (\Z^d)$ on generators
$e_1, \dots , e_d$ in degree $1$ by the ideal generated by all elements of the form
\[
\partial e_{i_1\dots i_r}:=\sum_{q}(-1)^{q-1}e_{i_1} \cdots
\widehat{e_{i_q}}\cdots e_{i_r},
\]
for which $\codim H_{i_1}\cap \cdots \cap H_{i_r} < r$.
Throughout this paper, we work with an essential, central arrangement of $d$ hyperplanes; this means we may
always assume $L(\A)$ has rank $n$ and
\[
{\mathcal A} =  \bigcup\limits_{i=1}^d V(\alpha_i) \subseteq {\mathbb
  P}^{n-1},
\]
where $\alpha_i$ are distinct homogeneous linear forms such that $H_i = V(\alpha_i)$. Write $[d]$ for
$\{1,\ldots ,d\}$ and let $\Lambda =\{i_1,\ldots ,i_k\} \subset [d]$. If $codim(\bigcap_{j=1}^k H_{i_j}) < k$,
then there are constants $c_{t}$ such that
\[
\sum\limits_{t \in \Lambda} c_t \alpha_t =0.
\]
In \cite{ot1}, Orlik and Terao introduced the following commutative analog of the Orlik-Solomon algebra in order
to answer a question of Aomoto.
\begin{defn}
For each dependency $\Lambda=\{i_1,\ldots ,i_k\}$, let $r_\Lambda = \sum_{j=1}^k c_{i_j}y_{i_j} \in R
=\mathbb{K}[y_1,\ldots,y_d]$. Define $f_\Lambda = \partial(r_\Lambda) = \sum_{j=1}^k c_{i_j} (y_{i_1}\cdots \hat
y_{i_{j}} \cdots y_{i_k})$, and let $I$ be the ideal generated by the $f_{\Lambda}$. The \textbf{Orlik-Terao
algebra} $\ot$ is the quotient of $\mathbb{K}[y_1,\ldots,y_d]$ by $I$. The \textbf{Artinian Orlik-Terao algebra}
$\ao$ is the quotient of $\ot$ by the squares of the variables.
\end{defn}

Orlik and Terao actually study the Artinian version, but for our purposes the $\ot$ algebra will turn out to be
more interesting. The crucial difference between the Orlik-Solomon algebra and Orlik-Terao algebra(s) is not the
difference between the exterior algebra and symmetric algebra, but rather the fact that the Orlik-Terao algebra
actually records the ``weights'' of the dependencies among the hyperplanes. So in any investigation where actual
dependencies come into play, the $\ot$ algebra is the natural candidate for study.

\subsection{2-Formality}
In \cite{fr}, Falk and Randell introduced the concept of 2-formality:
\begin{defn}\label{defn:Relsp}
For an arrangement ${\mathcal A}$, the {\em relation space}
$F(\mathcal A)$ is the kernel of the evaluation map $\phi$:
\[
\bigoplus\limits_{i=1}^d \mathbb{K}e_i \stackrel{e_i \mapsto \alpha_i}{\longrightarrow}
\mathbb{K}[x_1,\ldots,x_n]_1.
\]
$\A$ is {\em 2-formal} if $F(\mathcal A)$ is spanned by relations involving only three hyperplanes.
\end{defn}
\begin{exm}\label{exm:firstex}
 Suppose we have an arrangement of 4 lines in ${\mathbb P}^2$
given by the linear forms: $\alpha_1=x_1, \alpha_2=x_2, \alpha_3=x_3, \alpha_4=x_1+x_2+x_3$. Obviously any three
of the forms are independent, so the only relation is $\alpha_1+\alpha_2+\alpha_3-\alpha_4 = 0$. Hence the $\ot$
algebra is
\[
\mathbb{K}[y_1,\ldots,y_4]/\langle y_2y_3y_4+y_1y_3y_4+y_1y_2y_4-y_1y_2y_3 \rangle.
\]
This arrangement cannot be 2-formal, since there are no relations involving only three lines, whereas $F(\mathcal
A)$ is nonzero.

\end{exm}Many interesting classes of arrangements are 2-formal: in \cite{fr}, Falk and Randell proved that
 $K(\pi,1)$ arrangements and arrangements with quadratic Orlik-Solomon
ideal are 2-formal. In \cite{bt}, Brandt and Terao generalized the
notion of 2-formality to $k-$formality, proving that every free
 arrangement is $k-$formal. Formality of discriminantal arrangements
is studied in \cite{bb}, with surprising connections to fiber polytopes
\cite{bs}. In \cite{yu}, Yuzvinsky shows that 
free arrangements are 2-formal; and gives an example showing 
that 2-formality does not depend on $L(\A)$.

\begin{exm}\label{Yuz}
Consider the following two arrangements of lines in $\mathbb{P}^2$:
\[
\begin{array}{ccc}
\!\!\!\A_1\!\! &\!\! =\!\! &\! \!V(xyz(x\!+y\!+z)(2x\!+y\!+z)(2x\!+3y\!+z)(2x\!+3y\!+4z)(3x\!+5z)(3x\!+4y\!+5z)) \\
\!\!\!\A_2\!\!& \!\!=\!\!\!\! &\!\!\!\!V(xyz(x\!+y\!+z)(2x\!+y\!+z)(2x\!+3y\!+z)(2x\!+3y\!+4z)(x\!+3z)(x\!+2y\!+3z)).
\end{array}
\]
For a graded $R$--module $M$, the graded betti numbers of $M$ are
\[
b_{ij} = \dim_{\mathbb{K}}\Tor_i^R(M,\mathbb{K})_{i+j}.
\]
We shall be interested in the case when $M=R/I$. The graded betti numbers of the $\ot$ and $\ao$ algebras
of $\A_1$ and $\A_2$ are identical; for $\ot = R/I$ the numbers are:
\begin{small}
$$
\vbox{\offinterlineskip 
\halign{\strut\hfil# \ \vrule\quad&# \ &# \ &# \ &# \ &# \ &# \ &# \ &# \ &# \ &# \ &# \ &# \ &# \  &# \ &# \cr
total&1&26&120&216&190&84&15\cr \noalign {\hrule} 0&1 &--&--& --&--&  --&--&  \cr 1&--&6 &-- & --&--& --&--&
\cr 2&--&20&120 &216&190& 84 & 15 \cr \noalign{\bigskip} \noalign{\smallskip} }}
$$
\end{small}
\noindent This diagram is read as follows: the entry in position $(i,j)$ is simply $b_{ij}$.
So for example,
\[
\dim_{\mathbb{K}}\Tor_2^R(R/I,\mathbb{K})_4 = 120.
\]

\noindent Yuzvinsky shows that $\mathcal A_1$ is 2-formal, and $\mathcal A_2$ is not. The arrangements have the
same intersection lattice and appear identical to the naked eye:

\begin{figure}[h]
\begin{center}
\includegraphics{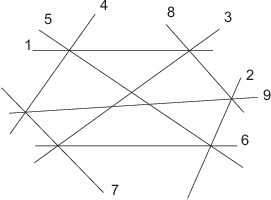}
\caption{\textsf{}}
\label{fig:yuz}
\end{center}
\end{figure}

The difference is that the six multiple points of $\mathcal A_2$ lie on a smooth conic, while the six multiple points
of $\mathcal A_1$ do not. The quadratic $\ot$ algebra is $\mathbb{K}[y_1,\ldots,y_d]/I_2$, where $I_2$ consists
of the quadratic generators of $I$. The graded betti numbers of the  quadratic $\ot$ algebra of $\mathcal A_1$
are:

\begin{small}
$$
\vbox{\offinterlineskip 
\halign{\strut\hfil# \ \vrule\quad&# \ &# \ &# \ &# \ &# \ &# \ &# \ &# \ &# \ &# \ &# \ &# \ &# \ \cr
total&1&6&15&20&15&6&1 \cr \noalign {\hrule} 0&1 &--&--& --&--&--&--  \cr 1&--&6 &-- & --&--&--&--   \cr 2&--&--
&15 & --&--&--&--  \cr 3&--&-- &-- &20&--&--&--  \cr 4&--&--&--&--&15&--&--  \cr 5&--&--&--&--&--&6&--
 \cr 6&--&--&--&--&--&--&1&  \cr \noalign{\bigskip} \noalign{\smallskip} }}
$$
\end{small}
\noindent while the quadratic $\ot$ algebra of $\mathcal A_2$ has betti diagram:

\begin{small}
$$
\vbox{\offinterlineskip 
\halign{\strut\hfil# \ \vrule\quad&# \ &# \ &# \ &# \ &# \ &# \ &# \ &# \ &# \ &# \ &# \ &# \ &# \ \cr
total&1&6&20&31&21&5 \cr \noalign {\hrule} 0&1 &--&--& --&--&--  \cr 1&--&6 &-- & --&--&--   \cr 2&--&-- &20 &
16&5&--  \cr 3&--&-- &-- &15&16&5&  \cr \noalign{\bigskip} \noalign{\smallskip} }}
$$
\end{small}
\end{exm}
This paper is motivated by the question raised by the previous
example: ``Is 2-formality determined by the quadratic $\ot$ algebra?''
Our main result is: \vskip .1in \noindent{\bf Theorem}: Let $\mathcal A$ be an
arrangement of $d$ hyperplanes of rank $n$. $\mathcal A$ is
2-formal if and only if $V(I_2)\cap (\mathbb{C}^*)^{d-1}$ has codimension $d-n$. \pagebreak

The class of 2-formal arrangements is
quite large, and includes free arrangements, $K(\pi,1)$ arrangements,
and arrangements with quadratic Orlik-Solomon ideal. We show below that if
$\codim(I_2) = d-n$, then $\A$ is 2-formal. Le-Mohammadi point out in \cite{LM}
that for the converse, it is necessary to saturate. 
\begin{defn}
Let $G$ be a simple graph on $\nu$ vertices, with edge-set $\mathsf{E}$,
and let $\A_{G}=\{z_i-z_j=0\mid (i,j)\in \mathsf{E} \}$
be the corresponding arrangement in $\C^{\nu}$.
\end{defn}
For a graphic arrangement $\A_G$ it is obvious that $d = |\mathsf{E}|$,
and easy to show that $\rank \A_G = \nu -1$.
In \cite{t}, Tohaneanu showed that a graphic arrangement $\A_{G}$ is
2-formal exactly when $H_1(\Delta_G) = 0$, where $\Delta_G$ is the
clique complex of $G$--a simplicial complex, whose $i$-faces
correspond to induced complete subgraphs on $i+1$ vertices.

\begin{exm}\label{egypt}
The clique complex for $G$ as in Figure~2 consists of the four
triangles (and all vertices and edges). Clearly $H_i(\Delta_G)=0$
for all $i\ge 1$, so $\A_G$ is 2-formal.
\begin{figure}[h]
\setlength{\unitlength}{0.6cm}
\begin{picture}(5,4.8)(0,-2.5)
\xygraph{!{0;<12mm,0mm>:<0mm,12mm>::}
[]*-{\bullet}
(-[dd]*-{\bullet},-[dr]*-{\bullet}
(-[ur],-[dl]),-[rr]*-{\bullet}
(-[dd]*-{\bullet}
(-[ll],-[ul])))
}
\end{picture}
\caption{\textsf{}}
\label{fig:nonhyper}
\end{figure}

\noindent The graded betti numbers of $\ot$ are
\begin{small}
$$
\vbox{\offinterlineskip 
\halign{\strut\hfil# \ \vrule\quad&# \ &# \ &# \ &# \ &# \ &# \ &# \ &# \ &# \ &# \ &# \  \cr
total&1&5&10&9&3 \cr \noalign {\hrule} 0&1 &--&--& --&--  \cr 1&--&4 &-- & --&
--   \cr 2&--&1 &9 & 3&--  \cr 3&--&-- &1 &6&3  \cr 4&--&--&--&--&15  \cr \noalign{\bigskip} \noalign{\smallskip} }}
$$
\end{small}
\noindent while the quadratic $\ot$ algebra of $\mathcal A_G$ has betti diagram:

\begin{small}
$$
\vbox{\offinterlineskip 
\halign{\strut\hfil# \ \vrule\quad&# \ &# \ &# \ &# \ &# \ &# \ &# \ &# \ &# \
 &# \ &# \  \cr
total&1&4&6&4&1 \cr \noalign {\hrule} 0&1 &--&--& --&--  \cr 1&--&4 &-- & --&-
-   \cr 2&--&--
&6 & --&--  \cr 3&--&-- &-- &4&--  \cr 4&--&--&--&--&1
\cr \noalign{\bigskip} \noalign{\smallskip} }}
$$
\end{small}
Since $I_2$ is clearly a complete intersection, $codim(I_2) = 4 = 8-(5-1) = d-
n$, and Corollary~\ref{onedirection} gives another
proof of 2-formality for this configuration.
\end{exm}
While the examples of 2-formal arrangements encountered thus far have $I_2$
a complete intersection, this is generally not the case.
\begin{exm}\label{NF}
The Non-Fano arrangement is the unique configuration of seven lines in $\mathbb{P}^2$ having six triple points. It is free, hence 2-formal. The graded betti numbers of $\ot$ are
\begin{small}
$$
\vbox{\offinterlineskip 
\halign{\strut\hfil# \ \vrule\quad&# \ &# \ &# \ &# \ &# \ &# \ &# \ &# \ &# \ &# \ &# \  \cr
total&1&7&14&12&4 \cr \noalign {\hrule} 0&1 &--&--& --&--  \cr 1&--&6 &5 & --&--   \cr 2&--&1
&9 & 12&4  \cr \noalign{\bigskip} \noalign{\smallskip} }}
$$
\end{small}
\noindent while the quadratic $\ot$ algebra has betti diagram:

\begin{small}
$$
\vbox{\offinterlineskip 
\halign{\strut\hfil# \ \vrule\quad&# \ &# \ &# \ &# \ &# \ &# \ &# \ &# \ &# \ &# \ &# \  \cr
total&1&6&10&6&1 \cr \noalign {\hrule} 0&1 &--&--& --&--  \cr 1&--&6 &5 & --&--   \cr 2&--&--
&5 & 6&--  \cr 3&--&-- &-- &--&1 \cr \noalign{\bigskip} \noalign{\smallskip} }}
$$
\end{small}
So $I_2$ is not a complete intersection, though it is Gorenstein. In
general, $I_2$ need not even be Cohen-Macaulay, and often has
multiple components. For the Non-Fano
arrangement the primary decomposition of $I_2$ is
\[
I_2 = I \cap \langle y_0,y_1,y_3,y_4 \rangle
\]
In particular, if $I \ne I_2$, the primary decomposition is nontrivial.
\end{exm}
When $\A$ is 2-formal, it is possible to give a necessary 
combinatorial criterion for $I_2$ to be a complete intersection.
\begin{cor}\label{CIcrit}
Let $\A$ be a 2-formal arrangement of rank $n$, with $|\A|=d$.
If $I_2$ is a complete intersection, then 
\[
b_2 = {d \choose 2}-d + n
\]
\end{cor}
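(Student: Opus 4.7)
The plan is to translate the complete intersection condition into a numerical equality on $\dim_{\mathbb{K}}(I_2)_2$, and then to compute that dimension using the Orlik--Terao Hilbert function of $\ao$. By the main theorem, 2-formality gives $\codim I_2 = d-n$. Since $I_2$ is generated in a single degree (namely $2$), its minimal number of generators equals $\dim_{\mathbb{K}}(I_2)_2$; by Krull's height theorem this number is bounded below by $\codim I_2$, with equality if and only if $I_2$ is a complete intersection. Under 2-formality this reduces the corollary to proving the identity
\[
\dim_{\mathbb{K}}(I_2)_2 \;=\; \binom{d}{2} - b_2,
\]
after which one equates $d-n$ with $\binom{d}{2} - b_2$ to get the stated criterion.

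To obtain this identity I would invoke the Orlik--Terao formula $\HF(\ao,t) = \pi(\A,t)$ from \cite{ot1}; comparing the $t^2$-coefficient gives $\dim_{\mathbb{K}}(\ao)_2 = b_2$. Since $\ao = R/(I+(y_1^2,\ldots,y_d^2))$ and $\dim_{\mathbb{K}}R_2 = \binom{d+1}{2}$, we have
\[
b_2 \;=\; \binom{d+1}{2} \;-\; \dim_{\mathbb{K}}\bigl(I+(y_1^2,\ldots,y_d^2)\bigr)_2.
\]
The key bookkeeping step is that each quadratic generator $f_\Lambda$ (with $|\Lambda|=3$) is a $\mathbb{K}$-linear combination of squarefree monomials $y_iy_j$, $i\ne j$, and that $I$ has no linear generators because no two hyperplanes in $\A$ are proportional. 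Hence $(I_2)_2 = I\cap R_2$, this space meets $\mathrm{span}(y_1^2,\ldots,y_d^2)$ trivially, and
\[
\dim_{\mathbb{K}}\bigl(I+(y_1^2,\ldots,y_d^2)\bigr)_2 \;=\; \dim_{\mathbb{K}}(I_2)_2 + d.
\]
Substituting gives $b_2 = \binom{d}{2} - \dim_{\mathbb{K}}(I_2)_2$, which is the desired identity.

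The only input requiring external knowledge is the Orlik--Terao Hilbert function identity; the remainder is Krull's height theorem together with the elementary observation that $f_\Lambda$ has no square terms. I therefore do not anticipate any serious obstacle beyond assembling these ingredients in the correct order.
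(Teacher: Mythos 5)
Your proposal is correct and follows essentially the same route as the paper: both compute $\dim_{\mathbb{K}}(I_2)_2 = \binom{d}{2}-b_2$ by comparing the degree-two Hilbert function of $\ao$ with that of the Orlik--Solomon algebra, and then combine $\codim(I_2)=d-n$ (from the main theorem) with the criterion that a homogeneous ideal generated in a single degree is a complete intersection exactly when its number of minimal generators equals its codimension. The only (minor, and in fact slightly cleaner) divergence is that you justify $(I_2)_2\cap\mathrm{span}(y_1^2,\ldots,y_d^2)=0$ directly from the squarefreeness of the generators $f_\Lambda$, whereas the paper appeals to the primality of $I$ to conclude that no $y_i^2$ lies in $I_2$.
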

\begin{proof}
First, since $b_2$ is the dimension of the Orlik-Solomon algebra in degree two,
\[
b_2 = \dim_{\mathbb{K}}\Lambda^2(\mathbb{K}^d) - \dim_{\mathbb{K}}J_2,
\]
where $J$ denotes the Orlik-Solomon ideal. Now, by the result of Orlik-Terao mentioned
earlier, $\ao$ has the same Hilbert function as the Orlik-Solomon algebra. This means
\[
b_2 = \dim_{\mathbb{K}}(R/I+\langle y_1^2,\ldots,y_d^2\rangle)_2
\]
Now, clearly
\[
\dim_{\mathbb{K}}(R/\langle y_1^2,\ldots,y_d^2\rangle)_2 = \dim_{\mathbb{K}}\Lambda^2(\mathbb{K}^d).
\]
In the next section, it is proved that $I$ is prime. It
follows from this that no $y_i^2 \in I_2$.
Combining, we obtain that
\[
\dim_{\mathbb{K}}I_2 = \dim_{\mathbb{K}}J_2.
\]
Theorem~\ref{colonThm} shows $A$ is 2-formal iff $I_2:y_1\cdots y_d = I$; in particular $I$ is an associated prime of $I_2$. If $I_2$ is a complete intersection, it is unmixed, so $\codim(I_2) = d-n$. As $I_2$ is a complete intersection iff $\codim(I_2) = \dim_{\mathbb{K}}(I_2)$, the result follows.
\end{proof}
\section{The quadratic $\ot$ algebra and 2-Formality}\label{sec:two}
We keep the setup of the previous section:
$\A$ is an essential, central arrangement of $d$ hyperplanes in
$\mathbb{P}^{n-1}$, with relation space $F(\A)$.
Since $\dim_{\mathbb{K}}F(\A)=d-n$, $\A$ is
2-formal if and only iff $\dim_K(span_{\mathbb{K}}(\{\mbox{3-relations}\}))=d-n$.
Fix defining linear forms $\alpha_i$ so that $H_i=V(\alpha_i)$,
let $I \subseteq R=\mathbb{K}[y_1,\ldots,y_d]$ denote the OT ideal,
and $I_2$ the quadratic component of $I$.


\begin{prop}\label{Prime}
$I$ is prime, and $V(I)$ is a rational variety of codimension $d-n$.
\end{prop}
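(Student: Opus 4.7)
The plan is to identify $R/I$ with the subring $C(\A) := \mathbb{K}[1/\alpha_1,\ldots,1/\alpha_d] \subset \mathbb{K}(x_1,\ldots,x_n)$ via the reciprocal map $\phi: R \to \mathbb{K}(x_1,\ldots,x_n)$ sending $y_i \mapsto 1/\alpha_i$. Since $C(\A)$ is a subring of a field, the isomorphism $R/I \cong C(\A)$ will immediately give primality of $I$; the codimension and rationality statements will then drop out of a direct analysis of the reciprocal map.

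First I would check that $\phi$ factors through $R/I$: for a circuit $\Lambda$ with dependency $\sum_{j \in \Lambda} c_j \alpha_j = 0$, dividing by $\prod_{j \in \Lambda}\alpha_j$ gives precisely $\phi(f_\Lambda)=0$. Essentiality says the $\alpha_i$ span $\mathbb{K}[x_1,\ldots,x_n]_1$, so $\mathrm{Frac}(C(\A)) = \mathbb{K}(\alpha_1,\ldots,\alpha_d) = \mathbb{K}(x_1,\ldots,x_n)$ has transcendence degree $n$ over $\mathbb{K}$; hence $\ker\phi$ is prime of codimension $d-n$. The same essentiality makes the reciprocal map $\Psi: x \mapsto (1/\alpha_1(x),\ldots,1/\alpha_d(x))$ injective on the arrangement complement (since $\alpha_i(x)=\alpha_i(x')$ for all $i$ forces $x=x'$), and its image closure is $V(\ker\phi)$, so $V(\ker\phi)$ is birational to $\mathbb{A}^n$ and therefore rational.

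The main obstacle is upgrading the obvious inclusion $I \subseteq \ker\phi$ to equality. The approach I would take is to exhibit the squarefree monomials $y_C = \prod_{i \in C} y_i$, indexed by no-broken-circuit sets $C$ of the matroid of $\A$, as a $\mathbb{K}$-spanning set for $R/I$: any monomial containing a broken circuit can be rewritten modulo the corresponding $f_\Lambda$ as a combination of monomials strictly smaller in a suitable term order, so this reduction terminates and the $y_C$ span. One then checks that the images $\phi(y_C) = \prod_{i \in C}(1/\alpha_i)$ are $\mathbb{K}$-linearly independent in $\mathbb{K}(x_1,\ldots,x_n)$, for instance via partial fractions, forcing $\bar\phi$ to be injective and completing the identification $R/I \cong C(\A)$.
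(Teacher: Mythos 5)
Your overall strategy---reduce everything to the identity $I=\ker\phi$, with primality, codimension, and rationality falling out of the reciprocal map---is exactly the paper's, and that first half of your argument is sound. The gap is in the step upgrading $I\subseteq\ker\phi$ to equality. The squarefree monomials $y_C$ indexed by no-broken-circuit sets do \emph{not} span $R/I$ as a $\mathbb{K}$-vector space: $R/I=\ot$ has Krull dimension $n\geq 1$, hence is infinite-dimensional over $\mathbb{K}$, and no finite set can span it. (The finite squarefree NBC basis belongs to the Artinian quotient $\ao=\ot/\langle y_1^2,\ldots,y_d^2\rangle$; you have conflated the two algebras.) Concretely, $y_1^2$ already survives in $(R/I)_2$ and is not congruent to any combination of squarefree quadrics, since every generator $f_\Lambda$ is a sum of squarefree terms and hence no element of $I_2$ involves a square. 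Your rewriting procedure in fact terminates at the \emph{infinite} set of monomials whose support contains no broken circuit, with arbitrary exponents allowed.

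Even with that correction, the linear independence of the images $\prod_i(1/\alpha_i)^{m_i}$, as $m$ ranges over all monomials with NBC support, is not a routine partial-fractions check; it is essentially the content of Terao's theorem computing the Hilbert series of $C(\A)$, and it is where all the work lives. The paper circumvents a direct independence argument by quoting Terao's formula $HS(C(\A),t)=P(\A,\frac{t}{1-t})$ together with the Proudfoot--Speyer result that the broken circuit relations form a universal Gr\"obner basis for $I$, and then running a deletion--restriction induction to show that $R/in(I)$ and $C(\A)$ satisfy the same Hilbert series recursion, which forces $I=\ker\phi$. If you want to keep your spanning-set approach, you must either import Terao's theorem and compare dimensions degree by degree (which in effect reproduces the paper's argument), or supply an honest inductive proof of the independence of the reciprocal products, e.g.\ again by deletion--restriction; as written, ``for instance via partial fractions'' conceals the main difficulty.
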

\begin{proof}
Consider the map
\[
R \stackrel{\Phi}{\longrightarrow} \mathbb{K}\Bigg[\frac{1}{\alpha_1},\ldots, \frac{1}{\alpha_d}\Bigg] = C(\A)
\]
given by $y_i \mapsto \frac{1}{\alpha_i}$. An easy check shows that $I \subseteq \ker \Phi$.
Our assumption that $\A$ is essential implies that
\[
\mathbb{K}\Bigg[\frac{1}{y_1},\ldots, \frac{1}{y_n}\Bigg] \subseteq  \mathbb{K}\Bigg[\frac{1}{\alpha_1},\ldots, \frac{1}{\alpha_d}\Bigg],
\]
hence the field of fractions of $R/\ker(\Phi)$ is $\mathbb{K}(y_1,\ldots,y_n)$,
giving rationality and the appropriate dimension (as an affine cone).
In \cite{ter}, Terao proved that the Hilbert series for $C(\A)$ is
given by
\[
HS(C(\A),t) = P\Big(\A, \frac{t}{1-t}\Big).
\]
where $P(\A,t)$ is the Poincar\'e polynomial of $\A$.
If $H$ is a hyperplane of $\A$, the {\em deletion} $\A'$
is the subarrangement $\A \setminus H$ of $\A$,
and the {\em restriction} $\A''$ is the arrangement
$\{H' \cap H \mid H'\in \A'\}$, considered as an arrangement
in the vector space $H$, and there is a relation
\[
P(\A,t)=P(\A',t)+tP(\A'',t).
\]
Thus the Hilbert series of $C(\A)$ satisfies
the recursion
\[
HS(C(\A),t) = HS(C(\A'),t)+\frac{t}{1-t}HS(C(\A''),t)
\]
If the quotient $R/I$ satisfies the same
recursion, then $I = \ker \Phi$ will follow by induction. Let $y_1$ be
a variable corresponding to $H = H_1$. In \cite{ps}, Proudfoot and
Speyer prove that the broken circuits are a universal Gr\"obner
basis for $I$, hence in particular a lex basis.
Let $R = \mathbb{K}[y_1,\ldots,y_n]$ and $R'= \mathbb{K}[y_2,\ldots,y_n]$,
and consider the short exact sequence
\[
0 \longrightarrow R(-1)/(in(I):y_1) \stackrel{\cdot y_1}{\longrightarrow}
R/in(I) \longrightarrow R/(in(I),y_1) \longrightarrow 0.
\]
The initial ideal of $I$ has the form
\[
in(I) = \langle f_1,\ldots,f_k, y_1\cdot g_1, \ldots,y_1\cdot g_m \rangle,
\]
with $f_i, g_j$ not divisible by $y_1$. Clearly
\[
in(I):y_1 = \langle f_1,\ldots,f_k, g_1, \ldots, g_m \rangle
\]
In the rightmost module of the short exact sequence, quotienting
by $y_1$ kills the generator and all relations involving $y_1$, hence
\[
R/(in(I),y_1) \simeq R'/\langle f_1,\ldots,f_k \rangle = R'/in(I'),
\]
with $I'$ denoting the $\ot$ ideal of $\A'$. In the leftmost
module, since no relation of $in(I):y_1$ involves $y_1$,
$in(I):y_1 \subseteq R'$, so taking into account the degree
shift and the fact that $y_1$ acts freely on the module, we see
\[
HS(R(-1)/(in(I):y_1),t) = \frac{t}{1-t}HS(R'/ \langle f_1,\ldots,f_k, g_1, \ldots, g_m \rangle,t).
\]
We now claim that
$HS(R'/ \langle f_1,\ldots,f_k, g_1, \ldots, g_m \rangle, t) = HS(C(A''),t)$.
To see this, note that the initial monomials $f_k$ express the fact that the dependencies
among hyperplanes of $\A'$ continue to hold in $\A''$. The
relations $g_j$ represent the ``collapsing'' that occurs on restricting to
$H_1$. For example, if there is a circuit $(123)$ in $\A$, this means
that $H_2|_{H_1} = H_3|_{H_1}$. In $\ot(\A)$, the relation on $(123)$ has
initial term $y_1y_2$, hence $y_2 \in in(I):y_1$. This reflects
the ``redundancy'' $H_2|_{H_1} = H_3|_{H_1}$ in the restriction.
Similar reasoning applies to the relations $g_i$ of degree greater
than one, leading to:
\[
HS(R'/ \langle f_1,\ldots,f_k, g_1, \ldots, g_m \rangle,t) = HS(C(\A''),t)
= P\Big(\A'',\frac{t}{1-t}\Big).
\]
Combining this with an induction and Terao's formula shows that $I = \ker \Phi$.
\end{proof}
\begin{cor}\label{nondegen}
The variety $V(I)$ is nondegenerate.
\end{cor}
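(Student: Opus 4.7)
The plan is to unwind what ``nondegenerate'' means and then use the explicit description of $I$ established in Proposition~\ref{Prime}. The variety $V(I) \subseteq \mathbb{P}^{d-1}$ is nondegenerate precisely when $I$ contains no nonzero linear form $\sum c_i y_i$. Since Proposition~\ref{Prime} identifies $I$ with $\ker \Phi$, where $\Phi(y_i) = 1/\alpha_i$, such a linear form lies in $I$ if and only if $\sum_{i=1}^d c_i/\alpha_i = 0$ in $\mathbb{K}(x_1,\ldots,x_n)$. Thus the corollary is equivalent to the $\mathbb{K}$-linear independence of $1/\alpha_1, \ldots, 1/\alpha_d$ inside the field of rational functions.

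To establish that independence, I would clear denominators in $\sum c_i/\alpha_i = 0$ to obtain the polynomial identity
\[
\sum_{i=1}^d c_i \prod_{j \neq i} \alpha_j = 0
\]
and then, for each fixed $k$, evaluate at a point of $H_k$ not lying on any other $H_j$ (such a point exists because the $H_j$ are pairwise distinct and $\mathbb{K}$ is infinite). Every term with index $i \neq k$ contains the factor $\alpha_k$ and so vanishes at such a point, leaving only $c_k \prod_{j\neq k} \alpha_j$ evaluated at a point where none of those factors vanishes. Hence $c_k = 0$, and varying $k$ over $[d]$ forces all $c_i = 0$.

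There is no real obstacle here: the substantive work, namely the identification $I = \ker \Phi$, has already been carried out in Proposition~\ref{Prime}, and the linear-independence check is a one-line genericity argument. The only care needed is checking the existence of the specialization point used above, which reduces to the elementary observation that no hyperplane is contained in a finite union of other distinct hyperplanes when the base field is infinite.
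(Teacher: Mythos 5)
Your proof is correct, but it takes a genuinely different route from the paper's. The paper also first reduces nondegeneracy to the statement $I_1=0$ (using primality of $I$ from Proposition~\ref{Prime} so that $V(I)\subseteq V(L)$ forces $L\in\sqrt{I}=I$), but then it finishes in one line from the \emph{definition} of the generators: a dependency involving only two hyperplanes would force two of the $H_i$ to coincide, so every circuit has length at least $3$, every $f_\Lambda$ has degree at least $2$, and hence $I$ contains no linear form. You instead invoke the identification $I=\ker\Phi$ from Proposition~\ref{Prime} and prove directly that $1/\alpha_1,\ldots,1/\alpha_d$ are $\mathbb{K}$-linearly independent via clearing denominators and specializing to a point of $H_k\setminus\bigcup_{j\neq k}H_j$. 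Your specialization argument is sound (it needs the $\alpha_i$ pairwise non-proportional, which holds since the $H_i$ are distinct, and an infinite base field, which holds since $\mathbb{K}\supseteq\mathbb{Q}$ here). What your approach buys is a self-contained proof of the slightly stronger fact that $\dim_{\mathbb{K}} C(\A)_1=d$, i.e.\ the degree-one instance of Terao's Hilbert series formula; what it costs is reliance on the full identification $I=\ker\Phi$, which is the hardest part of Proposition~\ref{Prime}, where the paper's argument needs only the shape of the defining generators plus radicality. One small point to make explicit: your opening equivalence ``nondegenerate precisely when $I$ contains no nonzero linear form'' uses the Nullstellensatz together with $\sqrt{I}=I$, so you should flag that you are using primality there, exactly as the paper does.
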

\begin{proof}
Since $I$ is prime, $V(I)$ will be contained in a hyperplane
$V(L)$ iff $L \in I$. However, the assumption that the hyperplanes
of $\A$ are distinct implies that there are no dependencies involving
only two hyperplanes, so that $I$ is generated in degree greater than
two. In particular, $I$ contains no linear forms.
\end{proof}

\begin{thm}\label{main1}
Let $\mathcal A$ be an arrangement of $d$ hyperplanes of rank $n$.
$\mathcal A$ is 2-formal if and only if $X = V(I_2)\cap (\mathbb{C}^*)^{d-1}$
has codimension $d-n$.
\end{thm}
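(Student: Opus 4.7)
The plan is to compute the tangent space of $V(I_2)$ at a generic point of $V(I)\cap(\mathbb C^*)^{d-1}$ and show its codimension equals $\dim F_3$, where $F_3\subseteq F(\A)$ denotes the subspace spanned by those relations involving only three hyperplanes. Since $\A$ is $2$-formal exactly when $\dim F_3=\dim F(\A)=d-n$ and since Proposition~\ref{Prime} gives that $V(I)\subseteq V(I_2)$ is irreducible of codimension $d-n$, this identification will yield the theorem.

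Using the parametrization $q\mapsto p(q)=(1/\alpha_1(q),\ldots,1/\alpha_d(q))$ from Proposition~\ref{Prime}, I would take a generic $q\in\mathbb C^n\setminus\bigcup_i H_i$, so $p(q)\in V(I)\cap(\mathbb C^*)^{d-1}$. For a $3$-circuit $\Lambda=\{i_1,i_2,i_3\}$ with relation $c_{i_1}\alpha_{i_1}+c_{i_2}\alpha_{i_2}+c_{i_3}\alpha_{i_3}=0$, the generator $f_\Lambda=c_{i_1}y_{i_2}y_{i_3}+c_{i_2}y_{i_1}y_{i_3}+c_{i_3}y_{i_1}y_{i_2}$ satisfies
\[
\frac{\partial f_\Lambda}{\partial y_{i_j}}\bigg|_{p(q)}=\frac{c_{i_{j+1}}\alpha_{i_{j+1}}(q)+c_{i_{j+2}}\alpha_{i_{j+2}}(q)}{\alpha_{i_{j+1}}(q)\alpha_{i_{j+2}}(q)}=\frac{-c_{i_j}\alpha_{i_j}(q)}{\alpha_{i_{j+1}}(q)\alpha_{i_{j+2}}(q)},
\]
where the second equality is forced by the $3$-relation and all denominators are nonzero. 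Scaling the $\Lambda$-th row of $\mathrm{Jac}(I_2)(p(q))$ by the nonzero factor $\alpha_{i_1}(q)\alpha_{i_2}(q)\alpha_{i_3}(q)$ produces the covector supported on $\Lambda$ whose $i_j$-entry equals $-c_{i_j}\alpha_{i_j}(q)^2$.

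After the invertible diagonal change of coordinates $w_i=\alpha_i(q)^2 v_i$ on $\mathbb C^d$ (valid because $p(q)$ lies in the torus), this covector pairs with $w$ as $-\sum_{j=1}^3 c_{i_j}w_{i_j}$, which is precisely the linear functional on $\mathbb K^d$ dual to the $3$-relation $\sum_j c_{i_j}\alpha_{i_j}$. Thus in the new basis the row span of $\mathrm{Jac}(I_2)(p(q))$ is exactly $F_3\subseteq\mathbb K^d$, so
\[
\rank\,\mathrm{Jac}(I_2)(p(q))=\dim F_3.
\]

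Since $V(I)$ is smooth of dimension $n$ at the generic point $p(q)$ and lies in a unique irreducible component $W$ of $V(I_2)$, the rank above equals $\codim W$. Combined with $V(I)\subseteq W$ and $\codim V(I)=d-n$, we get $W=V(I)$ iff $\dim F_3=d-n$, iff $\A$ is $2$-formal. The principal technical obstacle I expect is to pass from this local tangent-space equality at one generic point of $V(I)$ to the global codimension of $V(I_2)\cap(\mathbb C^*)^{d-1}$: one must verify that any other component of $V(I_2)$ (such as the second component appearing in Example~\ref{NF}) either fails to meet the torus or has codimension at least $d-n$ in the $2$-formal case. This is likely handled either by semicontinuity of the Jacobian rank across the torus, or by a primary decomposition argument showing that extraneous components lie in coordinate hyperplanes.
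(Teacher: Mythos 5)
Your core computation agrees with the paper's: evaluate the Jacobian of the quadratic generators at a torus point, simplify the partials using the vanishing of each generator, and rescale columns by the squares of the coordinates so that the rows become exactly the coefficient vectors of the $3$-relations, whence $\rank J_p(I_2)=\dim F_3$ (writing $F_3\subseteq F(\A)$ for the span of the $3$-relations). The genuine gap is the one you flag at the end, and it is not a side issue: because you evaluate only at points $p(q)=(1/\alpha_1(q),\ldots,1/\alpha_d(q))$ lying on $V(I)$, your rank identity controls only the component of $V(I_2)$ containing $V(I)$ and says nothing about the codimension of $X=V(I_2)\cap(\mathbb{C}^*)^{d-1}$ itself. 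The paper avoids this by running the identical computation at an \emph{arbitrary} point $p\in V(I_2)\cap(\mathbb{C}^*)^{d-1}$: the only inputs are $f(p)=0$, which for $p$ in the torus is equivalent to $a/p_1+b/p_2+c/p_3=0$, and $p_i\neq 0$; no parametrization of $V(I)$ enters. Thus $\rank J_p(I_2)=\dim F_3$ at \emph{every} point of $X$, so near a smooth point every irreducible component of $X$ sits inside the smooth complete intersection cut out by $\dim F_3$ generators with independent differentials, and all components are bounded at once --- no semicontinuity or primary decomposition is needed. The fix is simply to replace your parametrized base point by a general torus point of $V(I_2)$; the algebra is unchanged.

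A second, subtler issue concerns your assertion that ``the rank above equals $\codim W$.'' The kernel of $J_p(I_2)$ is the Zariski tangent space of the \emph{scheme} defined by $I_2$, which contains $T_p(W)$ for the reduced component $W$ but need not equal it unless $I_2$ is generically reduced along $W$. What follows unconditionally is $\dim F_3=\rank J_p(I_2)\le \codim W$, which, combined with $V(I)\subseteq W$ (so $\codim W\le d-n$) and the bound on the other components, yields the implication ``$2$-formal $\Rightarrow \codim X=d-n$''; the converse needs the reverse inequality, which your argument (and, to be fair, the paper's, which is equally terse here) does not supply. A clean way to close it: if $\beta\in F_3^{\perp}$ has all coordinates nonzero, then $(1/\beta_1,\ldots,1/\beta_d)\in V(I_2)$, since $f_\Lambda(1/\beta)=(\sum_j c_{i_j}\beta_{i_j})/(\beta_{i_1}\beta_{i_2}\beta_{i_3})$; as $(\alpha_1(q),\ldots,\alpha_d(q))\in F(\A)^{\perp}\subseteq F_3^{\perp}$ lies in the torus for generic $q$, coordinatewise inversion embeds a $(d-\dim F_3)$-dimensional piece of $F_3^{\perp}$ into $X$, so $\codim X\le \dim F_3$ always, and $\codim X=d-n$ forces $\dim F_3\ge d-n$, i.e.\ $2$-formality.
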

\begin{proof}
The proof hinges on the fact that for
$p \in (\mathbb{C}^*)^{d-1} \subseteq \mathbb{P}^{d-1}$,
the Jacobian ideal of $I_2$, evaluated at $p$, is (after
a linear change of coordinates) exactly the
matrix recording the dependencies among triples of hyperplanes.

Let
\[
f=ay_2y_3+by_1y_3+cy_1y_2=\partial(ay_1+by_2+cy_3), a,b,c\neq 0,
\]
be a generator of $I_2$, let $p=(p_1:\ldots:p_d)$ be a
point on $X$, so no $p_i = 0$. Such points
form a dense open subset of $V(I)$ by Corollary~\ref{nondegen}.
The Jacobian matrix of the generators
of $I_2$ has a special form when evaluated at 
$p \in V(I_2) \cap (\mathbb{C}^*)^{d-1}$. 
Write $f_i$ for $\partial(f)/\partial(y_i)$. 
For $f$ as above, if $i \ge 4$ then $f_i = 0$, and 
\begin{eqnarray}
f_{1}&=& by_3+cy_2 \nonumber \\
f_{2}&=& ay_3+cy_1 \nonumber \\
f_{3}&=& ay_2+by_1 \nonumber
\end{eqnarray}
Evaluating the partials of $f$ at $p$ yields:
\[
f_{1}(p)= p_2p_3(b\frac{1}{p_2}+c\frac{1}{p_3}), \mbox{  }
f_{2}(p)= p_1p_3(a\frac{1}{p_1}+c\frac{1}{p_3}), \mbox{  }
f_{3}(p)= p_1p_2(a\frac{1}{p_1}+b\frac{1}{p_2}).
\]
Since $f(p)=0$, $a\frac{1}{p_1}+b\frac{1}{p_2}+c\frac{1}{p_3}=0$, 
and simplifying using this relation, we obtain:
\[
f_{1}(p)= -\frac{p_2p_3}{p_1}a,  \mbox{  }
f_{2}(p)= -\frac{p_1p_3}{p_2}b, \mbox{  }
f_{3}(p)= -\frac{p_1p_2}{p_3}c. \]

\noindent The $p_i$ are nonzero, so rescaling shows that the row of
$J_p(I_2)$ corresponding to $f$ is:
\[
\Big[ \frac{a}{p_1^2}, \frac{b}{p_2^2},\frac{c}{p_3^2}, 0,\ldots, 0 \Big]
\]
Multiplying $J_p(I_2)$ on the right by the diagonal invertible matrix
with $(i,i)$ entry $p_i^2$ yields a matrix whose entries encode the
dependencies among triples of the forms $l_i$. Hence $\rank J_p(I_2)$ is
exactly the dimension of the space of three relations.
\end{proof}
\begin{cor}\label{onedirection}
If  $codim(I_2)=d-n$, then $\A$ is 2-formal.
\end{cor}
\begin{thm}\label{colonThm}
$\A$ is 2-formal if and only if $\codim(I_2:y_1 \cdots y_d)=d-n$.
\end{thm}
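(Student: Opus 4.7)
The final theorem reformulates Theorem \ref{main1}, which characterizes 2-formality via $\codim X$ for $X = V(I_2) \cap (\mathbb{C}^*)^{d-1}$. My plan is to deduce it by comparing $\codim V(I_2)$ with $\codim X$.

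Both quantities are at most $d-n$. By Proposition \ref{Prime}, $V(I) \subseteq V(I_2)$ is irreducible of codimension $d-n$, so the component of $V(I_2)$ containing $V(I)$ has codimension at most $d-n$, whence $\codim V(I_2) \le d-n$. By Corollary \ref{nondegen}, $V(I)$ is not contained in any coordinate hyperplane $V(y_i)$, so $V(I) \cap (\mathbb{C}^*)^{d-1}$ is open and dense in $V(I)$, forcing $\codim X \le d-n$ as well. Moreover, $X \subseteq V(I_2)$ yields $\codim X \ge \codim V(I_2)$. The implication $\codim(I_2) = d-n \Rightarrow$ 2-formal is immediate: the hypothesis combined with these bounds forces $\codim X = d-n$, and Theorem \ref{main1} yields 2-formality.

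Conversely, assuming 2-formality, Theorem \ref{main1} gives $\codim X = d-n$. Components $C$ of $V(I_2)$ meeting the torus pose no difficulty, since for such a $C$ the subset $C \cap (\mathbb{C}^*)^{d-1}$ is open dense in $C$ and contained in $X$, so $\codim C \ge \codim X = d-n$. The main obstacle---and the technical heart of the proof---is to rule out components $C \subseteq V(y_i)$ with $\codim C < d-n$. Setting $S = \{j : y_j \in I(C)\}$, primality of $I(C)$ first rules out any quadratic generator of $I$ whose underlying triple has exactly one index in $S$: such a generator would force $y_k y_l \in I(C)$ for $k, l \in T := [d]\setminus S$, violating primality since neither $y_k$ nor $y_l$ lies in $I(C)$. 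Re-running the Jacobian computation from the proof of Theorem \ref{main1} at a generic $p \in C$ with $p_j = 0$ exactly for $j \in S$, the surviving nonzero rows of $J_p(I_2)$ arise only from 3-relations supported in $T$, giving an identity of the shape $\codim C = |S| + \dim(\text{3-relations of }\A_T)$.

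The final step---which I expect to be the most delicate---is to upgrade this to $\codim C \ge d-n$, equivalently $\dim(\text{3-relations of }\A_T) \ge |T|-n$. The strategy is to exploit 2-formality of $\A$ through the inclusion $F(\A_T) \hookrightarrow F(\A)$ and the fact that the 3-relations of $\A$ span $F(\A)$ of dimension $d-n$, extracting enough 3-relations supported in $T$ to fill out at least the dimension $|T| - \rank \A_T \ge |T| - n$ of $F(\A_T)$. This matroid/linear-algebra step is where the forward direction must be made rigorous.
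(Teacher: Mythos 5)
Your forward implication ($\codim(I_2)=d-n \Rightarrow$ 2-formal) is correct and complete: sandwiching $\codim X$ between $\codim V(I_2)$ and the codimension of the dense torus part of $V(I)$ is exactly the right move, and it is in fact tidier than the paper's case analysis. The problem is the converse, and you have located precisely where the difficulty sits --- components of $V(I_2)$ contained in coordinate hyperplanes --- but you have not closed it. The step you defer, namely that for the support set $T=[d]\setminus S$ of such a component one has $\dim(\mbox{3-relations of }\A_T)\ge |T|-n$, is not a routine matroid/linear-algebra verification and does not follow from 2-formality of $\A$ by ``extracting'' 3-relations supported in $T$: a 3-relation of $\A$ that involves an index of $S$ does not restrict or project to a relation of $\A_T$, and subarrangements of 2-formal arrangements need not be 2-formal (2-formality is not even determined by $L(\A)$, as Example~\ref{Yuz} shows, so one can extend a non-2-formal arrangement such as $\A_2$ to a free, hence 2-formal, one). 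Thus for an arbitrary $T$ your target inequality can fail outright. Any repair must exploit the extra structure you proved but then discarded: no 3-circuit meets $S$ in exactly one index, and the 3-circuits meeting $S$ in exactly two indices contribute nonzero rows of $J_p(I_2)$ supported in the $S$-columns, so the honest bound is $\codim C\ \ge\ \dim(\mbox{3-rel of }\A_T)+\rank(\mbox{those $S$-supported rows})$, and one must show this sum is at least $d-n$ using 2-formality of $\A$. As written, your $\codim C = |S|+\dim(\mbox{3-rel of }\A_T)$ (which should in any case be ``$\ge$'', since $\rank J_p$ only bounds $\codim C$ from below) throws away the information carried by those rows, and the remaining claim is unsupported.

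For comparison, the paper's own proof takes a different and much terser route: it splits into the cases $\dim V(I)=\dim V(I_2)$ and $\dim V(I)<\dim V(I_2)$, and in the latter case argues that at a smooth point of $V(I_2)$ the tangent space is too large, then invokes semicontinuity to claim $\rank J_p(I_2)<d-n$ at \emph{all} points of $V(I_2)$, so the 3-relations cannot span. That semicontinuity step silently assumes the large component governs the tangent space at the torus points where the Jacobian computes the 3-relation space --- which is exactly the reducibility issue you isolated. So your framework is the more careful one, but it stalls on the genuinely hard case, and the proposal as it stands is incomplete.
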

\begin{proof}
By Theorem~\ref{main1}, we have
\vskip .05in
$\begin{array}{ccc}
\A \mbox{ is 2-formal }  & \leftrightarrow & \codim(V(I_2) \cap (\C^*)^{d-1})=d-n\\
                         & \leftrightarrow & \codim(V(I_2) \setminus (V(y_1\cdots y_n) \cap V(I_2))=d-n\\
                         & \leftrightarrow & \codim\overline{(V(I_2) \setminus (V(y_1\cdots y_n) \cap V(I_2))}=d-n\\
                         & \leftrightarrow & \codim (I_2:y_1\cdots y_d^\infty)=d-n
\end{array}$
\vskip .05in
To show that a single ideal quotient suffices, let $F(\A)$ be the space of relations 
on the $\alpha_i$; $r \in F(A)$ is of the form $\sum a_iy_i$ with $\sum a_i \alpha_i = 0$. Write $supp(r)$ 
for the indices of $a_i \ne 0$ appearing in $r$. Suppose $r_0,\ldots,r_s \in F(A)$ 
with $\Lambda_i = supp(r_i)$ and $r_0=\sum_{i=1}^s a_ir_i$, and let $\Lambda = \bigcup_{i=1}^n \Lambda_i$.
Then
\[
y_{\Lambda \setminus \Lambda_0} \partial(r) = a_1y_{\Lambda_2\cup\cdots \Lambda_s \setminus \Lambda_1} \partial(r_1)+
\cdots a_sy_{\Lambda_1\cup\cdots \Lambda_{s-1} \setminus \Lambda_s} \partial(r_s).\]
If $\A$ is $2$-formal, then any relation can be written as a sum of $3$-relations, so 
$\partial(r) \in I_2:y_1\cdots y_d$, hence $I \subseteq I_2:y_1\cdots
y_d \subseteq I_2:y_1\cdots y_d^\infty$ and $\codim I_2:y_1\cdots
y_d=d-n = \codim I$ implies $\codim(I_2:y_1\cdots y_d^\infty) = d-n$. In fact, 
\begin{equation}\label{Acriterion}
\A \mbox{ is 2-formal iff }I = I_2:y_1\cdots y_d,
\end{equation}
which follows since $I_2 \subseteq I$ implies $I_2:y_1\cdots y_d
\subseteq I_2:y_1\cdots y_d^\infty \subseteq I:y_1\cdots y_d^\infty \subseteq
I$, since $I$ is prime and nondegenerate. Hence $\codim I_2:y_1\cdots
y_d =d-n = \codim I$ implies $\codim(I_2:y_1\cdots y_d^\infty) = d-n$.
\end{proof}
\begin{rmk}
The published version of Theorem 2.5 omits the ideal quotient. While
Corollary~\ref{onedirection} shows that $\codim(I_2)=d-n$ implies 
$\A$ is $2$-formal, Example 5.1 of \cite{LM} shows that ideal 
quotient is needed for the converse. We thank Le-Mohammadi for 
pointing this out. Equation~\ref{Acriterion} was dropped in 
the published version of our paper.
\end{rmk}
\begin{prop}\label{ss}
If $\A$ is supersolvable, then $I=I_2$.
\end{prop}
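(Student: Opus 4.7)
My plan is to proceed by induction on the rank $n$ of $\A$. The base case $n \leq 2$ is immediate: every central essential arrangement of rank at most $2$ has all its circuits of size at most $3$, so the Orlik-Terao ideal is trivially generated in degree $2$. For the inductive step with $n \geq 3$, supersolvability guarantees a modular coatom $X$ in the intersection lattice $L(\A)$. I would partition $\A = \A_X \sqcup \A'$ with $\A_X = \{H \in \A : X \subseteq H\}$. Since the interval $[\hat 0, X]$ is the intersection lattice of $\A_X$ and intervals of supersolvable lattices are supersolvable, $\A_X$ is a supersolvable arrangement of rank $n-1$. The inductive hypothesis gives $I(\A_X) = I_2(\A_X)$, and since each generator of $I_2(\A_X)$ is also a generator of $I_2(\A)$, we obtain $I(\A_X) \subseteq I_2(\A)$.

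The engine driving the reduction is the classical modular property of $X$ (see, e.g., Orlik--Terao, Prop.~2.65): for every $H \in \A'$ and every $H' \in \A$ with $H' \neq H$, there exists a unique $K \in \A_X$ such that $\{H, H', K\}$ is a (necessarily 3-element) circuit of $\A$. Each such triple yields a quadratic generator $f_{\{H,H',K\}} = a\, y_{H'}y_K + b\, y_Hy_K + c\, y_Hy_{H'} \in I_2(\A)$ with all coefficients nonzero. I would then show, by induction on $|C \cap \A'|$, that for any circuit $C$ of $\A$ the element $f_C$ lies in $I_2(\A)$: if $C \subseteq \A_X$ we are done by the first paragraph, and otherwise we pick $H \in C \cap \A'$ together with some $H' \in C$ and the associated $K \in \A_X$, then use the quadratic relation $f_{\{H,H',K\}}$ to express $f_C$ modulo $I_2(\A)$ as (a multiple of) $f_{C'}$ for a dependent set $C' \subseteq (C \setminus \{H\}) \cup \{K\}$ with strictly smaller intersection with $\A'$.

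The main obstacle is making this elimination step rigorous inside the polynomial ring $R$, rather than in a localization: the relation $f_{\{H,H',K\}}$ only permits one to solve for $y_H$ after inverting $y_K y_{H'}$. The cleanest route I see is to combine this reduction with the Proudfoot--Speyer theorem already used in Proposition~\ref{Prime}, that broken circuits form a universal Gr\"obner basis for $I$. Choosing a term order induced by a linear extension of the modular chain, one checks that every broken circuit monomial of degree $\geq 3$ is divisible by the leading monomial of a broken circuit of degree $2$ supplied by the triangle $\{H, H', K\}$; a standard Buchberger-style reduction then writes each higher-degree generator $f_C$ as an $R$-linear combination of quadratic generators modulo strictly lower leading terms, which terminates and yields $I = I_2$.
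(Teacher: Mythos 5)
Your overall strategy --- reduce along the modular chain and then invoke the Proudfoot--Speyer universal Gr\"obner basis to show the initial ideal is generated in degree two --- is viable and genuinely different from the paper's proof, but as written it has two concrete gaps, one of which you flag yourself. First, the modular property is misstated: for a modular coatom $X$ one obtains $K\in\A_X$ with $H\cap H'\subseteq K$ only when \emph{both} $H$ and $H'$ lie in $\A'=\A\setminus\A_X$. If $H'\in\A_X$ the triangle need not exist (take the Boolean arrangement $V(xyz)$ in $\C^3$ with $X=V(x,y)$: for $H=V(z)$ and $H'=V(x)$ there is no third hyperplane through $H\cap H'$). This is repairable, because a circuit $C\not\subseteq\A_X$ must meet $\A'$ in at least two hyperplanes: if $C\cap\A'=\{H\}$, every other form in the dependency vanishes on $X$ while $\alpha_H$ does not, forcing the coefficient of $\alpha_H$ to vanish. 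But that observation is needed, and missing, in both your elimination step and your Gr\"obner step.

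Second, and more seriously, the sentence ``one checks that every broken circuit monomial of degree $\geq 3$ is divisible by the leading monomial of a broken circuit of degree $2$'' is where all the mathematical content lives, and the check is sensitive to the orientation of the order. If the broken circuit of $C$ is $C\setminus\min C$, then the quadratic broken circuit supplied by $D=\{H,H',K\}$ is $D\setminus\min D$, and for it to divide $y_{C\setminus\min C}$ you need $K=\min D$: the hyperplanes lying on the deeper modular flats must receive the \emph{smaller} labels, which is the opposite orientation from your elimination narrative, where $K$ is the element being introduced in exchange for $H$. You must also choose $H,H'$ to be two elements of $C$ in a common block of the chain, both different from $\min C$; this again rests on the ``at least two elements outside $\A_X$'' observation above. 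None of this is hard, but it is precisely where supersolvability enters, and it is not done. The paper sidesteps the divisibility question entirely: among circuits $C$ with $|C|=p\geq 4$ and $\partial(C)\notin I_2$ it selects one with \emph{maximal} lead term, completes two of its elements to a triangle $D$ using a new, larger-indexed hyperplane $H_u\notin C$, and writes the explicit identity $\partial(C)=\partial(C_r)+\partial(C_s)+\partial(D)P$, where $C_r,C_s$ are circuits of the same cardinality with strictly larger lead terms; maximality then yields the contradiction. That identity is exactly the ``rigorous elimination inside $R$'' you say you cannot see, so you should either supply the broken-circuit divisibility argument in full or adopt the paper's identity.
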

\begin{proof} 
Fix the reverse lexicographic order on $R=\mathbb K[y_1,\ldots,y_d]$.
Suppose $C$ is a circuit with $|C|=p\geq 4$, and that 
$\partial(C)\notin I_2$. From the set of circuits with 
$\partial(C)\notin I_2, |C|=p$, select the circuit 
$C=\{H_{j_1},\ldots,H_{j_p}\}, j_1<\cdots <j_p$ which has maximal
lead term $M=y_{j_2}\cdots y_{j_p}$. Since $\A$ is supersolvable,
there exists $j_r,j_s, 1\leq r< s\leq p$ and $u>j_s$ such that 
$D=\{H_{j_r},H_{j_s},H_u\}$ is a circuit.

If $u\in\{j_1,j_2,\ldots,j_p\}$ then $C$ contains $D$, which
would contradict the fact that $C$ is a circuit with $|C|\geq 4$. 
Hence $u\notin\{j_1,j_2,\ldots,j_p\}$. So $D$ gives rise to a dependency: 
\[
D=b_r\alpha_{j_r}+b_s\alpha_{j_s}+\alpha_u=0, b_r,b_s\neq 0,
\]
yielding an element 
\[
\partial(D)=y_{j_r}y_{j_s}+b_ry_{j_s}y_u+b_sy_{j_r}y_u \in I_2.
\]
Note that $C$ also gives a dependency 
$a_1\alpha_{j_1}+\cdots+a_p\alpha_{j_p}=0, a_k\neq 0$ and 
corresponding element 
\[
\partial(C)= a_ry_{j_1}\cdots\widehat{y_{j_r}}\cdots y_{j_p}+a_sy_{j_1}\cdots\widehat{y_{j_s}}\cdots y_{j_p}+ y_{j_r}y_{j_s}P \in I,
\]
with $P=a_1y_{j_2}\cdots\widehat{y_{j_r}}\cdots\widehat{y_{j_s}}\cdots y_{j_p}+\cdots+ a_py_{j_1}\cdots\widehat{y_{j_r}}\cdots\widehat{y_{j_s}}\cdots y_{j_{p-1}}.$
Consider the dependencies $C_r  =a_rD-b_rC$ and $C_s =a_sD-b_sC$. 
Writing $y_{j_r}y_{j_s}=\partial(D)-b_ry_{j_s}y_u-b_sy_{j_r}y_u$ and substituting
into the expression for $\partial(C)$ yields
\[
\partial(C)=\partial(C_r)+\partial(C_s)+\partial(D)P.
\]
Now note that $C_r$ and $C_s$ are circuits of cardinality $p$ with leading 
terms of $\partial(C_r)$ and $\partial(C_s)$ greater than $M$, since 
we replaced the variable $y_{j_r}$ or $y_{j_s}$ by $y_u$, with $u>j_s>j_r$. 
The choice of $M$ and $C$ now implies that 
$\partial(C_r)$ and $\partial(C_s)$ are both in $I_2$, a contradiction.
\end{proof}

\section{Combinatorial Syzygies}\label{sec:three}
Example~\ref{Yuz} shows that the module of first syzygies on $I_2$ is
not determined by combinatorial data. In this section we study
{\em linear} first syzygies. First, we examine the syzygies which
arise from an $X\in L_2(\A)$ with $\mu(X)\geq 3$.
If $\mu(X)=d-1$, then $d$ hyperplanes $H_1,\ldots, H_d$ pass thru $X$. To simplify,
we localize to the rank two flat, so that $\A$ consists of $d$ points in ${\mathbb P}^1$.
\begin{thm}\label{syz2} Suppose $X\in L_2(\A)$ has $\mu(X)=d-1 \geq 3$ and 
let $I \subseteq R = \mathbb K[y_1,\ldots,y_d] \subseteq \mathbb K[y_1,\ldots,y_n]$ be the 
subideal of $I_2$ corresponding to $\A_X$. The ideal $I$ has an
Eagon-Northcott resolution \vskip .03in
\begin{small}
\[
\cdots \rightarrow S_2(R^2)^* \otimes \Lambda^4 R(-1)^{d-1} \rightarrow (R^2)^* \otimes \Lambda^3 R(-1)^{d-1} \rightarrow \Lambda^2 R(-1)^{d-1} \rightarrow \Lambda^2 R^2 \rightarrow R/I \rightarrow 0.
\]
\end{small}
\vskip .03in
In particular, the only nonzero betti numbers are
\[
\dim_{\mathbb K}Tor_i(R/I,\mathbb K)_{i+1} = i\cdot{ d-1 \choose i+1}.
\]
\end{thm}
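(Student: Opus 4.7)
The plan is to identify $V(I)$ with a rational normal curve in $\mathbb{P}^{d-1}$ and then quote its classical Eagon--Northcott resolution. Since $\mu(X)=d-1$, there are $d$ hyperplanes of $\A$ containing the codimension-two flat $X$; after localizing to this rank-two flat we may assume $\alpha_{1},\ldots,\alpha_{d}$ are pairwise non-proportional linear forms spanning a $2$-dimensional subspace of $\mathbb{K}[x_{1},x_{2}]_{1}$. By Proposition~\ref{Prime} applied to $\A_{X}$, the variety $V(I)\subseteq\mathbb{P}^{d-1}$ is the image of the morphism
\[
\Psi\colon\mathbb{P}^{1}\to\mathbb{P}^{d-1},\qquad [x]\mapsto\Big[\prod_{j\ne 1}\alpha_{j}(x):\cdots:\prod_{j\ne d}\alpha_{j}(x)\Big],
\]
given by $d$ homogeneous forms of degree $d-1$ in $\mathbb{K}[x_{1},x_{2}]$.

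The next step is to verify that the forms $f_{i}=\prod_{j\ne i}\alpha_{j}$ are linearly independent. If $\sum c_{i}f_{i}=0$, then evaluating at a zero of $\alpha_{k}$ kills every $f_{i}$ with $i\ne k$ but leaves $f_{k}$ nonzero, by distinctness of the $\alpha_{i}$; hence $c_{k}=0$ for every $k$. The $f_{i}$ therefore form a basis of $\mathbb{K}[x_{1},x_{2}]_{d-1}$, so $\Psi$ is projectively equivalent to the complete linear system $|\mathcal{O}_{\mathbb{P}^{1}}(d-1)|$, and $V(I)$ is a rational normal curve of degree $d-1$ in $\mathbb{P}^{d-1}$ after a linear change of coordinates.

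The theorem then follows from the classical fact that the defining ideal of the rational normal curve of degree $d-1$ in $\mathbb{P}^{d-1}$ is generated by the $2\times 2$ minors of a $2\times(d-1)$ catalecticant matrix of linear forms, with minimal free resolution equal to the Eagon--Northcott complex of that matrix. Substituting $F=R^{2}$ and $G=R(-1)^{d-1}$ in the general description produces the complex displayed in the theorem, and the standard rank formulas for the Eagon--Northcott complex yield $\dim_{\mathbb{K}}\Tor_{i}(R/I,\mathbb{K})_{i+1}=i\binom{d-1}{i+1}$.

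The step most likely to require care is matching the subideal $I$ --- generated by the Orlik--Terao quadrics $f_{ijk}$ arising from three-element dependencies within $\A_{X}$ --- with the full defining ideal of the rational normal curve, which is minimally generated by $\binom{d-1}{2}$ quadrics. The inclusion of $I$ into the ideal of $V(I)$ is immediate from Proposition~\ref{Prime}, and because the rational normal curve is projectively normal with ideal generated in degree two, the reverse inclusion reduces to the observation that the degree-two piece of $I_{\A_{X}}$ is by construction spanned by the $f_{ijk}$.
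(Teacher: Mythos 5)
Your argument is correct, and it reaches the rational normal curve by a genuinely different road than the paper. The paper computes $\deg V(I)$ by cutting with the hyperplane $y_d=0$ (Corollary~\ref{nondegen} guarantees $y_d$ is a nonzerodivisor), reads off the primary decomposition of $\langle I,y_d\rangle$ to get degree $d-1$, and then verifies smoothness separately by appealing to the tangent-space computation in the proof of the main theorem, before invoking the classification of smooth irreducible nondegenerate curves of degree $d-1$ in $\mathbb{P}^{d-1}$. You instead exhibit $V(I)$ directly as the image of $\mathbb{P}^1$ under the complete linear system $|\mathcal{O}_{\mathbb{P}^1}(d-1)|$, via the clean observation that the $d$ forms $\prod_{j\ne i}\alpha_j$ are linearly independent and hence a basis of $\mathbb{K}[x_1,x_2]_{d-1}$. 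This yields the rational normal curve, and in particular smoothness at \emph{every} point --- including the coordinate points, where the paper's tangent-space argument (stated for $p\in(\mathbb{C}^*)^{d-1}$) does not literally apply --- in one stroke, at the cost of no longer reusing the 2-formality machinery. Your closing step, identifying the quadratic subideal with the full ideal of the curve, replaces the paper's appeal to Proposition~\ref{ss} (rank-two arrangements are supersolvable, so $I=I_2$) with degree-two generation of the rational normal curve's ideal; just note that this step needs the full strength of Proposition~\ref{Prime}, namely $I_{\A_X}=\ker\Phi$, so that $(\ker\Phi)_2$ coincides with the span of the $f_{ijk}$ --- the inclusion $I\subseteq\ker\Phi$ alone would not suffice. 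With that made explicit, both routes are complete and both terminate in the same classical Eagon--Northcott resolution and Betti number formula.
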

\begin{proof} 
Let $X \in L_2(\A)$ with $\mu(X)=d-1$. After a change of coordinates,
$X=V(x_1,x_2)$, and $X \in H$ iff $l_H \in \langle x_1,x_2 \rangle$. Localization
is exact, so without loss of generality we may assume that $\A$ consists of 
$d$ points in ${\mathbb P}^1$. By Proposition~\ref{ss}, $I$ is quadratic, and
by Proposition~\ref{Prime}, $I$ has codimension $d-2$, so $V(I)$ is
an irreducible curve in ${\mathbb P}^{d-1}$. Since the irrelevant maximal ideal is not an associated prime, $I$ has depth at least one, and by 
Corollary~\ref{nondegen}, $X$ is not contained in any hyperplane, so 
$y_d$ is a nonzero divisor on $R/I$. This implies that 
$\deg X=\deg V(I +\langle y_d\rangle)$.
Since $\mathcal A$ has rank two, any set of three hyperplanes 
is dependent and thus every triple $\{H_i,H_j,H_k\}$ yields an element of $I$. 
Therefore
\[
\langle I,y_d \rangle= \langle J,y_d \rangle,
\mbox{ where }J=\langle \{y_iy_j\}_{1\leq i<j\leq d-1}\rangle.
\]
It follows that the primary decomposition of $\langle I,y_d\rangle$ is 
\[
\langle I,y_d\rangle=\bigcap_{1\leq i_1<\cdots<i_{d-2}\leq d-1}\langle y_{i_1},\ldots,y_{i_{d-2}},y_d \rangle,
\]
so $\deg V(\langle I,y_d\rangle)=d-1$. 
Any smooth, irreducible nondegenerate 
curve of degree $d-1$ in ${\mathbb P}^{d-1}$ is 
a rational normal curve, and has an Eagon-Northcott resolution 
\cite{eis}.
So we need only show that $V(I)$ is smooth. Our main theorem implies
$\A$ is 2-formal, and the proof of that result shows that 
\[
\dim T_p(V(I)) = 1
\]
for all $p \in V(I)$. Hence $V(I)$ is rational normal curve
of degree $d-1$.
\end{proof}
\subsection{Graphic arrangements}
For a graphic arrangement, all weights of dependencies are $\pm 1$, so the 
Orlik-Terao ideal $I_G$ has a presentation that is essentially 
identical to that of the Orlik-Solomon ideal obtained in \cite{SS02}. 
The proof of the next lemma is straightforward.
\begin{lem}\label{graphGens}
$I_G$ is minimally generated by the chordless cycles of $G$.
\end{lem}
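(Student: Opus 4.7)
The plan is to establish three claims in order: (i) every chordless cycle $C$ yields a generator $f_C = \partial r_C \in I_G$; (ii) any cycle of $G$ with a chord contributes a redundant generator; and (iii) the chordless-cycle generators are minimal. Claim (i) is immediate: traversing $C$ gives a $\pm 1$ dependency $\sum_{e\in E(C)}\varepsilon_e\alpha_e = 0$ among the defining forms, so $f_C\in I_G$. Since the circuits of the graphic matroid of $G$ are exactly the edge sets of cycles of $G$, the $f_C$ ranging over cycles generate $I_G$.

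For (ii), I would induct on $|C|$. If $C$ has a chord $e^*$, it splits $C$ into two shorter cycles $C_1, C_2$ sharing only the edge $e^*$, with $E(C) = (E(C_1)\cup E(C_2))\setminus\{e^*\}$. Write $r_{C_i} = \rho_i + \epsilon_i y_{e^*}$ with $\rho_i$ supported on $E(C_i)\setminus\{e^*\}$ and $\epsilon_1 = -\epsilon_2\neq 0$, so that $r_C = \rho_1 + \rho_2$. Expanding $\partial r_{C_1}$ and $\partial r_{C_2}$ and using commutativity to kill the two $y_{e^*}$-cross-terms yields
\[
\epsilon_1\cdot f_C \;=\; (\partial\rho_2)\,f_{C_1}\;-\;(\partial\rho_1)\,f_{C_2},
\]
placing $f_C\in\langle f_{C_1},f_{C_2}\rangle$; induction on $|C|$ then reduces $f_C$ to a combination of chordless-cycle relations. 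Producing this algebraic identity is the main technical step of the proof; once it is in place, everything else is organizational.

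For (iii), fix a chordless cycle $C$ and let $\pi\colon R\to R$ be the $\mathbb K$-algebra map sending $y_e\mapsto 0$ for $e\notin E(C)$ and fixing the remaining $y_e$. Then $\pi(f_C)=f_C\neq 0$. The key combinatorial claim is that every other chordless cycle $C'$ satisfies $|E(C')\setminus E(C)|\geq 2$: otherwise $E(C')\cap E(C)$ would be a single path in the simple cycle $C$ whose endpoints are joined by the lone extra edge $e^*\in E(C')\setminus E(C)$, and either that path uses all but one edge of $C$ (forcing $e^*$ parallel to the remaining edge, contradicting simplicity of $G$) or the path is shorter (making $e^*$ a chord of $C$). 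Hence every monomial of $f_{C'}$ contains at least one specialized variable, so $\pi(f_{C'})=0$. A relation $f_C = \sum_{C'\neq C}h_{C'}f_{C'}$ with $C'$ chordless would then specialize to $f_C = 0$, a contradiction, proving minimality.
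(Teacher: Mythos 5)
The paper does not actually supply an argument here---it simply declares the proof of this lemma ``straightforward''---so there is no in-paper proof to compare against; judged on its own, your proof is correct and complete. The reduction in step (ii) is the real content, and your identity checks out: set $A=E(C_1)\setminus\{e^*\}$, $B=E(C_2)\setminus\{e^*\}$, write $y_S=\prod_{e\in S}y_e$, and use that $\partial(r'+r'')=(\partial r')\,y_{S''}+y_{S'}\,(\partial r'')$ whenever $r',r''$ have disjoint supports $S',S''$; this gives $f_{C_1}=(\partial\rho_1)y_{e^*}+\epsilon_1 y_A$, $f_{C_2}=(\partial\rho_2)y_{e^*}-\epsilon_1 y_B$, and $f_C=(\partial\rho_1)y_B+y_A(\partial\rho_2)$, whence $(\partial\rho_2)f_{C_1}-(\partial\rho_1)f_{C_2}=\epsilon_1 f_C$ exactly as you claim, and the induction on $|C|$ terminates at chordless cycles. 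The specialization argument for minimality is also sound; the only case you pass over silently is $|E(C')\setminus E(C)|=0$, which is ruled out because distinct circuits of a matroid are incomparable. Two points worth making explicit if you write this up: first, step (i) quietly uses the fact that the Orlik--Terao ideal is generated by the circuit relations alone (the paper's definition allows arbitrary dependent sets); this is standard but deserves a citation. Second, your argument is the exact commutative analogue of the chordless-cycle presentation of the Orlik--Solomon ideal of a graphic arrangement obtained in \cite{SS02}, which is evidently what the authors had in mind when they called the proof straightforward.
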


\begin{lem}\label{linSyzG}
Every $K_4$ subgraph yields two minimal linear first syzygies on $I_G$.
\end{lem}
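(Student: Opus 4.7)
The plan is to exhibit two explicit linearly independent syzygies with linear coefficients on the four quadratic generators of $I_G$ coming from the $K_4$ triangles. By Lemma~\ref{graphGens}, these generators are $f_{T_1}, f_{T_2}, f_{T_3}, f_{T_4}$, one per triangle $T_i$ of $K_4$. I will produce the syzygies from the three $4$-cycles $C_1, C_2, C_3$ of $K_4$ (equivalently, from the three perfect matchings).

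Each such $4$-cycle $C$ is a circuit of the cycle matroid of $K_4$, so, although not a minimal one, it yields a dependency $r_C = \sum_{e \in C} \pm \alpha_e = 0$ and an associated cubic $f_C = \partial(r_C) \in I$. Since $K_4$ is chordal, $\A_{K_4}$ is supersolvable, and Proposition~\ref{ss} gives $I = I_2$ (locally on this $K_4$), so $f_C$ can be written as a linear combination of the $f_{T_i}$ with \emph{linear} coefficients. Concretely, $C$ has two diagonals in $K_4$, and each diagonal $e$ exhibits $C \cup \{e\}$ as the union of two triangles meeting along $e$; tracking the dependency coefficients then produces an identity
\[
f_C = \ell\, f_{T_i} + \ell'\, f_{T_j}
\]
with $\ell, \ell'$ explicit linear forms supported on edges not in $T_i \cup T_j$ or on the shared diagonal. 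Equating the two such expressions for $f_C$, one per diagonal, gives a degree-$1$ syzygy on four of the $f_{T_i}$.

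The three $4$-cycles produce three such syzygies, and the essential step is to verify that they span a $2$--dimensional subspace. This follows from the fact that the four triangle dependencies $r_{T_1}, r_{T_2}, r_{T_3}, r_{T_4}$ in $F(\A_{K_4})$ satisfy a single linear relation $r_{T_1} - r_{T_2} + r_{T_3} - r_{T_4} = 0$ (forced by $\dim F(\A_{K_4}) = 6 - 3 = 3 < 4$); this is the unique source of dependence among the three $f_C$-syzygies, so exactly two are independent. Minimality is then automatic: $I_G$ has no generators in degree $1$, so no degree-$1$ syzygy on the quadratic $f_{T_i}$ can be induced from syzygies of lower degree.

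The main obstacle is the signed bookkeeping required to solve $f_C = \ell\, f_{T_i} + \ell'\, f_{T_j}$ for the linear coefficients and then confirm the single dependence among the three resulting syzygies. This is a finite, elementary computation — for instance, for $C = \{e_{12}, e_{23}, e_{34}, e_{14}\}$, using the diagonal $e_{13}$ one finds $\ell = -y_{14} + y_{34}$ and $\ell' = y_{12} + y_{23}$, and the other diagonal $e_{24}$ yields the second expression — and the $S_4$-symmetry of $K_4$ reduces the verification to a single representative case.
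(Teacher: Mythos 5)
Your construction is correct, and it is a genuinely different route from the paper's. The paper simply writes down the four quadrics $f_1,\dots,f_4$ in explicit coordinates, exhibits the two linear syzygies by ``direct calculation,'' and then invokes a Hilbert function computation as in Corollary~\ref{CIcrit} to pin down the count. You instead \emph{derive} the syzygies: each $4$-cycle $C$ of $K_4$ admits two triangulations by its two diagonals, each giving an identity $f_C=\ell\,f_{T_i}+\ell'\,f_{T_j}$ with linear $\ell,\ell'$, and equating the two expressions yields a linear syzygy. I checked your representative identity: for $C=\{e_{12},e_{23},e_{34},e_{14}\}$ and diagonal $e_{13}$ one indeed gets $f_C=(-y_{14}+y_{34})f_{123}+(y_{12}+y_{23})f_{134}$ (note the supports of $\ell,\ell'$ are the non-diagonal edges of the \emph{other} triangle, not edges outside $T_i\cup T_j$ as you wrote — a harmless slip since your explicit forms are right). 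In fact the syzygies coming from the $4$-cycles $\{12,24,34,13\}$ and $\{13,23,24,14\}$ are, up to sign, exactly the two the paper displays, and the third is minus their sum. What your approach buys is an explanation of where the syzygies come from (the two chordal splittings of each $4$-cycle, i.e.\ the mechanism behind Proposition~\ref{ss}) and a template that would adapt to other multiple-triangulation situations; the paper's version is shorter but opaque.

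Two soft spots to tighten. First, your claim that the relation $r_{T_1}-r_{T_2}+r_{T_3}-r_{T_4}=0$ is ``the unique source of dependence'' among the three $4$-cycle syzygies is asserted, not proved; fortunately all the lemma needs is that \emph{two} of the three are independent, which is immediate from comparing, say, the coefficients of $f_1$ in the two displayed syzygies. Second, if ``two'' is to mean \emph{exactly} two — as it must for the count $2\kappa_3$ in Theorem~\ref{linStrand} — you still need an upper bound on the dimension of the degree-$3$ part of the syzygy module, which your construction does not supply; that is precisely what the paper's closing Hilbert function computation (via Terao's formula for the Hilbert series of $R/I$) provides. Your minimality argument is fine: since the chordless cycles minimally generate $I_G$ (Lemma~\ref{graphGens}), the syzygy module vanishes in degree $2$, so every nonzero degree-$3$ syzygy is a minimal generator.
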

\begin{proof} Let $K_4$ be the complete graph on $\{1,2,3,4\}$. There are four relations:
\[
y_{12}+y_{23}-y_{13}, \mbox{ }
y_{12}+y_{24}-y_{14}, \mbox{ }
y_{13}+y_{34}-y_{14}, \mbox{ }
y_{23}+y_{34}-y_{24}.
\]
Let $y_1=y_{12}, y_2=y_{13}, y_3=y_{14}, y_4=y_{23}, y_5=y_{24}, y_6=y_{34}$. $I$ is generated by:
\begin{eqnarray}
f_4&=& y_2y_4 + y_1y_2-y_1y_4 \nonumber \\
f_3&=& y_3y_5+y_1y_3-y_1y_5 \nonumber \\
f_2&=& y_3y_6+y_2y_3-y_2y_6 \nonumber \\
f_1&=& y_5y_6+y_4y_5-y_4y_6 \nonumber
\end{eqnarray}
A direct calculation yields the pair of (independent) linear syzygies:
\begin{eqnarray}
(y_1-y_2)f_1-(y_1+y_5)f_2+(y_2+y_6)f_3-(y_6-y_5)f_4=0 \nonumber \\
(y_2-y_3)f_1-(y_4-y_5)f_2+(y_4-y_2)f_3-(y_5-y_3)f_4=0 \nonumber 
\end{eqnarray}
A Hilbert function computation as in Corollary~\ref{CIcrit} 
concludes the proof.
\end{proof}
\begin{thm}\label{linStrand}
If $\kappa_i$ is the number of induced subgraphs of type $K_{i+1} \subseteq G$,
then 
\[
Tor_i(R/I_G)_{i+1} = \begin{cases}
\kappa_2 & i = 1\\
2\kappa_3 & i=2\\
0 & i \ge 3.
\end{cases}
\]
\end{thm}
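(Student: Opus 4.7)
The plan is to split by cases of $i$, first reduce to the quadratic subideal $I_G^{(2)} \subseteq I_G$, and then combine the explicit lower-bound syzygies from Lemma \ref{linSyzG} with an upper bound extracted from the Proudfoot--Speyer Gröbner basis used in Proposition \ref{Prime}. The reduction is quick: the quotient $I_G/I_G^{(2)}$ is generated in degrees $\geq 3$ (from chordless cycles of length $\geq 4$, by Lemma \ref{graphGens}), so by minimality all its $\Tor$ modules vanish strictly below the second diagonal; the long exact sequence for $0 \to I_G^{(2)} \to I_G \to I_G/I_G^{(2)} \to 0$ then forces $\Tor_i^R(R/I_G^{(2)},\mathbb{K})_{i+1} \cong \Tor_i^R(R/I_G,\mathbb{K})_{i+1}$ for every $i \geq 1$, so I may replace $I_G$ by $I_G^{(2)}$ throughout.

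The case $i=1$ is then immediate from Lemma \ref{graphGens}: the minimal quadratic generators are in bijection with triangles of $G$, giving $b_{1,2}=\kappa_2$. For $i=2$, Lemma \ref{linSyzG} produces two linear first syzygies per induced $K_4$, so $b_{2,3} \geq 2\kappa_3$. Independence across distinct induced $K_4$'s comes from a support argument: two distinct induced $K_4$'s share at most one common triangle of $G$, whereas each of the two syzygies produced from a fixed $K_4$ has full support on all four of its triangle generators, so the coefficient of any triangle not in the shared overlap kills every scalar in a hypothetical linear dependence.

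For the matching upper bound $b_{2,3} \leq 2\kappa_3$ and for the vanishing $b_{i,i+1}=0$ when $i \geq 3$, I would pass to the initial ideal via the Proudfoot--Speyer theorem: under the broken-circuit Gröbner basis, $\mathrm{in}(I_G^{(2)})$ is the edge ideal of the squarefree graph $H$ on the vertex set $\mathsf{E}$ of $G$-edges, having an $H$-edge $\{e_2,e_3\}$ for every triangle $\{e_1,e_2,e_3\}$ of $G$ with $e_1 < e_2 < e_3$. Upper-semicontinuity of graded Betti numbers under Gröbner degeneration gives $b_{i,i+1}(R/I_G^{(2)}) \leq b_{i,i+1}(R/\mathrm{in}(I_G^{(2)}))$, and the right side is computed by Hochster's formula as $\sum_{|W|=i+1} \dim_{\mathbb{K}} \tilde H_0(\mathrm{Ind}(H)|_W)$. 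The combinatorial analysis of $H$ then completes the proof: for $|W|=3$, disconnections of $\mathrm{Ind}(H)|_W$ arise exactly from the induced $K_4$'s of $G$ (one $H$-triangle per $K_4$, contributing $2$ to $\tilde H_0$), summing to $2\kappa_3$; for $|W|\geq 4$, the complement of $H|_W$ is always connected, which amounts to ruling out spanning complete bipartite subgraphs of $H$. This combinatorial analysis of broken-triangle structure is the main obstacle, as it is precisely where the graph-theoretic specifics of $G$ enter, beyond abstract squarefree monomial data.
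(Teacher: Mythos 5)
Your reduction to the quadratic subideal, the $i=1$ count, and the lower bound $b_{2,3}\ge 2\kappa_3$ are essentially sound, though the independence argument needs repair: when several induced $K_4$'s overlap, a fixed $K_4$ may share \emph{every one} of its four triangles with other $K_4$'s appearing in a putative dependence, so there need not be ``a triangle not in the shared overlap.'' The fix is to look, inside the syzygy component indexed by a triangle $T$ of $K$, at the coefficient of a variable $y_e$ with $e$ an edge joining $T$ to the apex vertex of $K$; those variables occur in no other $K_4$ containing $T$, and (as one checks from the two displayed syzygies in Lemma~\ref{linSyzG}) they isolate the two scalars attached to $K$.

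The genuine gap is the upper bound and the vanishing for $i\ge 3$. Semicontinuity only gives $b_{i,i+1}(R/I_G^{(2)})\le b_{i,i+1}(R/\mathrm{in}(I_G^{(2)}))$, and the right-hand side depends on the chosen edge-ordering and is in general \emph{strictly larger} than the target. Concretely, let $G$ have vertices $1,2,3,4$ and edges $12,13,14,24,34$ (two triangles $124$, $134$ glued along $14$; $\kappa_3=0$). Under the lexicographic edge order the broken-circuit lead terms are $y_{14}y_{24}$ and $y_{14}y_{34}$, so your graph $H$ contains an induced path on $\{14,24,34\}$ and Hochster's formula yields $b_{2,3}(R/\mathrm{in}(I_G^{(2)}))\ge 1$, whereas the true value is $0$ (two irreducible quadrics with distinct supports admit no linear syzygy). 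So the deferred claim that disconnections of $3$-subsets arise exactly from induced $K_4$'s is false for some orderings; you would have to prove that a suitable ordering exists for \emph{every} $G$ (and also that the quadrics' lead terms control the whole linear strand of the initial ideal, since the quadrics need not be a Gr\"obner basis of $I_G^{(2)}$), and none of this is addressed. The paper takes a different and shorter route to the matching upper bound: it observes that $I_G$ has essentially the same presentation as the Orlik--Solomon ideal of $\A_G$, and combines a Hilbert-function comparison as in Corollary~\ref{CIcrit} (using that $\ao$ and the Orlik--Solomon algebra have the same Hilbert function) with the known computation of the linear strand for graphic Orlik--Solomon algebras in \cite{SS02}. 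Some dimension count of that kind, rather than Gr\"obner degeneration, is the missing ingredient in your argument.
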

\begin{proof}
Combine a Hilbert function argument as in Corollary~\ref{CIcrit} 
with suitable modifications to the proofs of Corollary 6.6 and 
Lemma 6.9 of \cite{SS02}.
\end{proof}
\subsection{The spaces $R_k(\mathcal A)$}
In \cite{bt}, Brandt-Terao introduce higher relation spaces. 
\begin{defn}\label{defn:highRel}
For $X\in L_2(\mathcal A)$, let $F(\mathcal A_X)$ be the 
subspace of $F(\mathcal A)$ generated by the relations associated 
to circuits of length 3 $\{H_i,H_j,H_k\}$, with $X\subset H_i, H_j,H_k$. The inclusion map 
$F(\mathcal A_X)\hookrightarrow F(\mathcal A)$ gives a map 
\[
\pi: \bigoplus_{X\in L_2(\mathcal A)}F(\mathcal A_X)
\longrightarrow F(\mathcal A).
\]
The \textit{first relation space} $\mathcal R_3(\mathcal A)=\ker \pi.$
\end{defn}
The space $\mathcal R_3(\mathcal A)$ captures the dependencies among 
the circuits of length 3 in $\A$, and $\A$ is 2-formal iff $\pi$ is surjective.
It is clear that $\dim F(\A_X)=\mu(X)-1,$ 
where $\mu$ is the M\"obius function.
\begin{prop}\label{syz4} Let $X_1,\ldots,X_s\in L_2(\A)$ and let 
$r_1\in F(\A_{X_1}),\ldots,r_s\in F(\A_{X_s})$ be nonzero relations. 
If $$L_1\partial(r_1)+\cdots+L_s\partial(r_s)=0, L_i\neq 0$$ is a 
linear syzygy, then there exist $a_i\in\mathbb K$ 
with $a_1r_1+\cdots+a_sr_s=0.$
\end{prop}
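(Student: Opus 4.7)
The plan is to reuse the tangent-space technique from the proof of Theorem~\ref{main1}: differentiate the purported syzygy, evaluate at a generic point of $V(I)$, and read off a scalar dependence among the $r_i$.

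Concretely, I would start from $L_1\partial(r_1)+\cdots+L_s\partial(r_s)=0$ and differentiate with respect to each variable $y_k$. Evaluating the $k$-th derivative at a point $q\in V(I)\cap(\mathbb{C}^*)^{d-1}$ (which lies in $V(I_2)$ since $I_2\subseteq I$) kills every summand of the form $(\partial L_i/\partial y_k)\,\partial(r_i)(q)$, leaving the single row-vector identity
\[
\sum_{i=1}^{s}L_i(q)\,\nabla\partial(r_i)(q)=0 \text{ in } \mathbb{K}^d.
\]
By the rescaling computation in the proof of Theorem~\ref{main1}, for each 3-relation $r_i$ the vector $\nabla\partial(r_i)(q)\cdot\mathrm{diag}(q_1^2,\ldots,q_d^2)$ is a nonzero scalar multiple of the coefficient vector $v_{r_i}\in\mathbb{K}^d$ of $r_i$ regarded as an element of $\bigoplus_{j=1}^{d}\mathbb{K}e_j$. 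Right-multiplying the previous identity by this single invertible diagonal matrix therefore produces
\[
\sum_{i=1}^{s}a_i\,v_{r_i}=0, \qquad a_i=-L_i(q)\prod_{j\in\mathrm{supp}(r_i)}q_j,
\]
which is precisely the asserted scalar dependence $a_1r_1+\cdots+a_sr_s=0$ in $F(\A)$.

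It remains to arrange that the $a_i$ are nonzero. Since $V(I)$ is nondegenerate by Corollary~\ref{nondegen}, no $L_i$ lies in $I$, so the vanishing locus of each $L_i$ is a proper closed subset of $V(I)\cap(\mathbb{C}^*)^{d-1}$; choosing $q$ outside all of these finitely many subsets yields $L_i(q)\ne 0$ and hence $a_i\ne 0$ for every $i$. The only delicate point is interpretive: the symbol $\partial(r_i)$ is unambiguous only when $r_i$ is a single circuit-dependency rather than an arbitrary element of $F(\A_{X_i})$, because $\partial$ does not descend linearly to the span of 3-circuits in $F(\A_{X_i})$. With that reading fixed, the main obstacle is just bookkeeping: the row-by-row diagonal rescaling already established for Theorem~\ref{main1} produces the desired scalar identity in one stroke.
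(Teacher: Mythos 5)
Your argument is correct, and it takes a genuinely different route from the paper's. The paper never differentiates: it first shows $\supp(L_i)\cap\supp(r_i)=\emptyset$ by a monomial-cancellation argument (a term $y_uy_v^2$ with $H_u\cap H_v=X_i$ could not cancel), then substitutes $(\frac{1}{y_1},\ldots,\frac{1}{y_d})$ into the syzygy, using the identity $\partial(r_i)(\frac{1}{y})=r_i/y_{\Lambda_i}$ to convert the syzygy on the quadrics back into a polynomial relation $f_1r_1+\cdots+f_sr_s=0$ on the linear forms; it then concludes $r_1\in\langle r_2,\ldots,r_s\rangle$ by primality of that linear ideal together with the primality and nondegeneracy of $I$ (otherwise some $mL_1$ would lie in $I$). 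Your route instead differentiates the syzygy, evaluates at a generic $q\in V(I)\cap(\mathbb{C}^*)^{d-1}$ so that the $\partial(r_i)(q)$ terms drop out, and applies the single diagonal rescaling $\mathrm{diag}(q_1^2,\ldots,q_d^2)$ already computed in Theorem~\ref{main1} to turn each gradient $\nabla\partial(r_i)(q)$ into $-q_{\Lambda_i}v_{r_i}$; this is shorter given that computation, avoids the support-disjointness step entirely, and delivers the stronger conclusion that \emph{all} $a_i$ are nonzero (the paper only exhibits a dependence with $a_1\neq 0$). Two small remarks: your restriction to single circuit-dependencies is not actually needed, since $\partial$ is well-defined on any nonzero relation via its support and the rescaling identity $\nabla\partial(r)(q)\cdot\mathrm{diag}(q_j^2)=-q_{\Lambda}v_r$ holds for supports of any size; and the genericity argument for $L_i(q)\neq 0$ does require, as you note, that $I$ is prime and contains no linear forms, which is exactly Proposition~\ref{Prime} and Corollary~\ref{nondegen} --- the same inputs the paper uses at its final step.
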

\begin{proof} Suppose 
$$L_1\partial(r_1)+\cdots+L_s\partial(r_s)=0, L_i\neq 0$$ is a linear syzygy, with 
$L_i=a_1^iy_1+\cdots+a_d^iy_d$. Define $supp(L_i)$ to be the set of 
indices $j$ for which $a_j^i\neq 0$, and suppose 
$supp(L_i)\cap supp(r_i)\neq \emptyset$ for some $i$. 
In the expression of $L_i\partial(r_i)$ there exists a nonzero monomial 
of the form $y_uy_v^2$ with $H_u\cap H_v=X_i$. Since in the syzygy this 
monomial must be cancelled, there exists $k\neq i$ such 
that $L_k\partial(r_k)$ contains a term $y_uy_v^2$. Since $y_v^2$ cannot 
occur in $\partial(r_k)$, we see that $X_k=H_u\cap H_v$, contradicting 
$X_k\neq X_i$.

\noindent Let $\Lambda_i = supp(r_i)$. Substituting 
$(\frac{1}{y_1},\ldots,\frac{1}{y_d})$ in the syzygy yields:
\[
\frac{f_1}{y_{[d]\setminus\Lambda_1}}\frac{r_1}{y_{\Lambda_1}}+\cdots+ \frac{f_s}{y_{[d]\setminus\Lambda_s}}\frac{r_s}{y_{\Lambda_s}}=0,
\]
where $\frac{f_i}{y_{[d]\setminus\Lambda_i}}=L_i(\frac{1}{y_1},\ldots,\frac{1}{y_d})$ 
and $f_i\in\mathbb K[y_1,\ldots, y_d]$ are square-free non-zero polynomials, possibly divisible
 by a 
square-free monomial. Hence $f_1r_1+\cdots+f_sr_s=0$ and 
$f_1r_1 \in\langle r_2,\ldots,r_s\rangle$.
If $r_1\notin \langle r_2,\ldots,r_s\rangle$, then $f_1\in \langle r_2,\ldots,r_s\rangle$ 
and $$f_1=P_2r_2+\cdots+P_sr_s.$$ Evaluating this expression 
at $(\frac{1}{y_1},\ldots,\frac{1}{y_d})$ and clearing the denominators 
shows there exists a monomial $m$ such that 
$mL_1\in\langle \partial(r_2),\ldots, \partial(r_s)\rangle \subseteq I.$ 
By Proposition ~\ref{Prime} and Corollary ~\ref{nondegen}, $I$ is 
nondegenerate prime ideal, so this is impossible, and $r_1\in \langle r_2,\ldots,r_s\rangle$.
\end{proof}
\noindent{\bf Acknowledgement} Computations were performed using Macaulay2,
by Grayson and Stillman, and available at: {\tt http://www.math.uiuc.edu/Macaulay2/}
We also thank Dinh Van Le and Fatemeh Mohammadi for pointing out the missing ideal
quotient in Theorem 2.5.
\bibliographystyle{amsalpha}

\end{document}